\newtheorem{theorem}{Theorem}
\newtheorem{corollary}[theorem]{Corollary}
\newtheorem{lemma}[theorem]{Lemma}
\newenvironment{proof}[1][Proof]{\noindent\textbf{#1.} }{\ \rule{0.5em}{0.5em}}
\begin{document}

\title{On the discrete boundary value problem for anisotropic equation}
\author{Marek Galewski, Szymon G\l \c{a}b}
\maketitle

\begin{abstract}
Using critical point theory methods we undertake the existence and
multiplicity of solutions for discrete anisotropic two-point boundary value
problems.
\end{abstract}

\section{Introduction}

In this note we consider an anisotropic difference equation 
\begin{equation}
\left\{ 
\begin{array}{l}
\Delta \left( |\Delta u(k-1)|^{p(k-1)-2}\Delta u(k-1)\right) =\lambda
f(k,u(k)), \\ 
u(0)=u(T+1)=0,%
\end{array}%
\right.   \label{zad}
\end{equation}%
where $\lambda >0$ is a numerical parameter, $f:[0,T+1]\times \mathbb{R}%
^{T+2}\rightarrow \mathbb{R}$, $[a,b]$ for $a<b$, $a,b\in 
\mathbb{Z}
$ denotes a discrete interval $\{a,a+1,...,b\},$ $\Delta u(k-1)=u\left(
k\right) -u(k-1)$ is the forward difference operator; $p:\left[ 0,T+1\right]
\rightarrow \mathbb{R}_{+}$, $p^{-}=\min_{k\in \left[ 0,T+1\right] }p\left(
k\right) >1$ and $p^{+}=\max_{k\in \left[ 0,T+1\right] }p\left( k\right) >1$%
; $q:\left[ 0,T+1\right] \rightarrow \mathbb{R}_{+}$ stands for the
conjugate exponent. We aim at showing that problem (\ref{zad}) has at least
one, at least two nontrivial solutions. The approach relays on the
application of the direct method of the calculus of variations, the mountain
pass technique and next the version of a finite dimensional three critical
point theorem which we derive for our special case.

Let us mention, far from being exhaustive, the following recent papers on
discrete BVPs investigated via critical point theory, \cite{agrawal}, \cite%
{caiYu}, \cite{Liu}, \cite{sehlik}, \cite{TianZeng}, \cite{teraz}, \cite%
{zhangcheng}, \cite{nonzero}. These papers employ in the discrete setting
the variational techniques already known for continuous problems of course
with necessary modifications. The tools employed cover the Morse theory,
mountain pass methodology, linking arguments. We can mention the following
works concerning problems similar to (\ref{zad}), \cite{KoneOuro}, \cite{MRT}%
, where the approach is variational. This is in strict contrast with the
continuous counterpart of (\ref{zad}) - see for example \cite{hasto} and its
vast references.

\section{Variational framework for (\protect\ref{zad})}

By a solution to (\ref{zad}) we mean such a function $x:[0,T+1]\rightarrow 
\mathbb{R}$\ which satisfies the given equation and the associated boundary
conditions. Solutions will be investigated in a space 
\begin{equation*}
H=\{u:[0,T+1]\rightarrow \mathbb{R}:u(0)=u(T+1)=0\}
\end{equation*}%
considered with a norm 
\begin{equation*}
||u||=\left( \sum_{k=1}^{T+1}|\Delta u(k-1)|^{2}\right) ^{1/2}.
\end{equation*}%
Then $(H,||\cdot ||)$ becomes a Hilbert space. The functional corresponding
to (\ref{zad}) is 
\begin{equation*}
J_{\lambda }(u)=\sum_{k=1}^{T+1}\frac{1}{p(k-1)}|\Delta
u(k-1)|^{p(k-1)}-\lambda \sum_{k=1}^{T}F(k,u(k)),
\end{equation*}%
where $F(k,u(k))=\int_{0}^{u(k)}f(k,t)dt$. With any fixed $\lambda >0$
functional $J_{\lambda }$ is differentiable in the sense of G\^{a}teaux and
its G\^{a}teaux derivative reads%
\begin{equation*}
\left\langle J_{\lambda }^{^{\prime }}(u),v\right\rangle
=\sum_{k=1}^{T+1}|\Delta u(k-1)|^{p(k-1)-2}\Delta u(k-1)\Delta
v(k-1)-\lambda \sum_{k=1}^{T}f(k,u(k))v\left( k\right) .
\end{equation*}%
A critical point to $J_{\lambda }$, i.e. such a point $u\in E$ that 
\begin{equation*}
\left\langle J_{\lambda }^{^{\prime }}(u),v\right\rangle =0\text{ for all }%
v\in E
\end{equation*}%
is a weak solution to (\ref{zad}). Summing by parts we see that any weak
solution to (\ref{zad}) is in fact a strong one. Hence in order to solve (%
\ref{zad}) we need to find critical points to $J_{\lambda }$ and investigate
their multiplicity.

The following auxiliary result was proved in \cite{MRT}.

\begin{lemma}
\label{lem1}

\begin{itemize}
\item[(a)] There exist two positive constants $C_1$ and $C_2$ such that 
\begin{equation*}
\sum_{k=1}^{T+1}|\Delta u(k-1)|^{p(k-1)}\geq C_1||u||^{p^-}-C_2
\end{equation*}
for every $u\in H$ with $||u||>1$.

\item[(b)] For any $m\geq 2$ there exists a positive constant $c_{m}$ such
that 
\begin{equation*}
\sum_{k=1}^{T}|u(k)|^{m}\leq c_{m}\sum_{k=1}^{T+1}|\Delta u(k-1)|^{m}
\end{equation*}%
for every $u\in H$.
\end{itemize}
\end{lemma}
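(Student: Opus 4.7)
My plan is to handle the two parts separately.

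Part (b) is a discrete Poincar\'e-type inequality. Since $u(0)=0$, telescoping gives $u(k)=\sum_{j=1}^{k}\Delta u(j-1)$, and H\"older's inequality with conjugate exponents $m$ and $m/(m-1)$ yields
\begin{equation*}
|u(k)|^{m}\le k^{m-1}\sum_{j=1}^{k}|\Delta u(j-1)|^{m}\le k^{m-1}\sum_{j=1}^{T+1}|\Delta u(j-1)|^{m}.
\end{equation*}
Summing over $k\in[1,T]$ then furnishes the inequality with $c_{m}=\sum_{k=1}^{T}k^{m-1}$; this step is routine.

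For part (a) the difficulty is that the exponent $p(k-1)$ varies with $k$, yet the ambient norm is of $\ell^{2}$ type. I would proceed in two stages: first replace the variable exponent by the constant $p^{-}$, then compare the resulting $\ell^{p^{-}}$-sum with $||u||^{p^{-}}$. The pointwise estimate $a^{p(k-1)}\ge a^{p^{-}}-1$ for $a\ge 0$ (verified by splitting into $a\ge 1$ and $0\le a<1$ and using $p(k-1)\ge p^{-}$) gives, after summation,
\begin{equation*}
\sum_{k=1}^{T+1}|\Delta u(k-1)|^{p(k-1)}\ge \sum_{k=1}^{T+1}|\Delta u(k-1)|^{p^{-}}-(T+1).
\end{equation*}
For the remaining sum I would select $k^{\ast}\in[1,T+1]$ maximising $|\Delta u(k-1)|$; since $||u||^{2}\le (T+1)|\Delta u(k^{\ast}-1)|^{2}$, one term of the $p^{-}$-sum already dominates:
\begin{equation*}
\sum_{k=1}^{T+1}|\Delta u(k-1)|^{p^{-}}\ge |\Delta u(k^{\ast}-1)|^{p^{-}}\ge (T+1)^{-p^{-}/2}\,||u||^{p^{-}}.
\end{equation*}
Combining the two bounds gives (a) with $C_{1}=(T+1)^{-p^{-}/2}$ and $C_{2}=T+1$.

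The only subtle point is the $\ell^{p^{-}}$/$\ell^{2}$ comparison on $\mathbb{R}^{T+1}$, whose power-mean form reverses direction at $p^{-}=2$; the max-coordinate argument above sidesteps this by producing the bound uniformly in the sign of $p^{-}-2$, so I anticipate no serious obstacle beyond bookkeeping.
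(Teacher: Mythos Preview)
Your argument is correct in both parts. The paper itself does not prove this lemma; it simply records it as an auxiliary result taken from Mih\u{a}ilescu--R\u{a}dulescu--Tersian \cite{MRT}, so there is no in-paper proof to compare against. Your self-contained derivation, with the explicit constants $C_{1}=(T+1)^{-p^{-}/2}$, $C_{2}=T+1$, and $c_{m}=\sum_{k=1}^{T}k^{m-1}$, is therefore a genuine addition. The max-coordinate device you use to pass from the $\ell^{p^{-}}$-sum to $\|u\|^{p^{-}}$ is exactly the right way to sidestep the reversal of the power-mean inequality when $p^{-}<2$; note also that your proof of (a) holds for every $u\in H$, so the hypothesis $\|u\|>1$ in the stated lemma is not actually needed for your constants.
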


We note that 
\begin{equation}
2^{m}\sum_{k=1}^{T}|u(k)|^{m}\geq \sum_{k=1}^{T+1}|\Delta u(k-1)|^{m}
\label{eq1}
\end{equation}%
for any $m\geq 2$. Since any two norms on a finite-dimensional Banach space
are equivalent, then there is a positive constant $K_{m}$ (depending on $m$)
with 
\begin{equation*}
\left( K_{m}\right) ^{m}||u||^{m}\geq \sum_{k=1}^{T}|u(k)|^{m}.
\end{equation*}%
It can be verified that 
\begin{equation}
(T+1)^{\frac{2-m}{2m}}||u||\leq \left( \sum_{k=1}^{T+1}|\Delta
u(k-1)|^{m}\right) ^{1/m}\leq (T+1)^{\frac{1}{m}}||u||  \label{relation_m}
\end{equation}%
for any $u\in H$ and any $m\geq 2$.

Let us recall some preliminaries from critical point theory. Let \textup{$E$
be a reflexive Banach space. }Let $J\in C^{1}(E,\mathbb{R})$. \textup{For
any sequence $\{u_{n}\}\subset E$, if $\{J(u_{n})\}$ is bounded and $%
J^{^{\prime }}(u_{n})\rightarrow 0$ as $n\rightarrow \infty $ possesses a
convergent subsequence, then we say $J$ satisfies the Palais--Smale
condition - (PS) condition for short.}

\begin{theorem}
\label{mountain}(Mountain pass lemma)\cite{Ma} Let $J$ satisfy the (PS)
condition. Suppose that

\begin{enumerate}
\item $J(0)=0$;

\item there exist $\rho >0$ and $\alpha >0$ such that $J(u)\geq \alpha $ for
all $u\in E$ with $||u|| =\rho $;

\item there exist $u_{1}$ in $E$ with $||u_{1}||\geq \rho $ such that $%
J(u_{1})<\alpha $.
\end{enumerate}

Then $J$ has a critical value $c\geq \alpha $. Moreover, $c$ can be
characterized as 
\begin{equation*}
\underset{g\in \Gamma }{\inf }\underset{u\in g([1,0])}{\max }J(u),
\end{equation*}%
where $\Gamma =\{g\in C([1,0],E):g(0)=0,g(1)=u_{1}\}$.
\end{theorem}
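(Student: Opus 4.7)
The plan is to show that $c := \inf_{g\in\Gamma}\max_{u\in g([0,1])} J(u)$ is a critical value of $J$, with $c\geq \alpha$. The lower bound $c\geq \alpha$ I would dispatch first, using a connectedness argument: for any $g\in\Gamma$ the map $t\mapsto \|g(t)\|$ is continuous on $[0,1]$ with $\|g(0)\|=0$ and $\|g(1)\|\geq\rho$, so by the intermediate value theorem there exists $t^\ast\in[0,1]$ with $\|g(t^\ast)\|=\rho$. Hypothesis 2 then gives $J(g(t^\ast))\geq\alpha$, whence $\max_{u\in g([0,1])} J(u)\geq\alpha$, and taking the infimum over $\Gamma$ yields $c\geq\alpha$.

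The heart of the proof is to argue by contradiction that $c$ is critical. Suppose $c$ is a regular value. Using the (PS) condition I would first show that the set $K_c=\{u\in E : J(u)=c,\ J'(u)=0\}$ is compact (empty under our hypothesis), and that $\|J'(u)\|$ is bounded below by some $\delta>0$ on a slab $\{u : c-2\bar\varepsilon\leq J(u)\leq c+2\bar\varepsilon\}$ for all sufficiently small $\bar\varepsilon>0$; otherwise one could extract a (PS) sequence converging to a critical point at level $c$. I would also shrink $\bar\varepsilon$ so that $c-2\bar\varepsilon>0=J(0)$ and $c-2\bar\varepsilon>J(u_1)$, ensuring $0$ and $u_1$ lie strictly below the slab.

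Next I would invoke the standard deformation machinery: construct a locally Lipschitz pseudo-gradient vector field $V$ on the regular set with $\|V(u)\|\leq 2$ and $\langle J'(u),V(u)\rangle\geq \|J'(u)\|$, multiply by a cutoff function supported in the slab, and solve the Cauchy problem $\tfrac{d}{dt}\eta(t,u)=-\psi(\eta)V(\eta)$, $\eta(0,u)=u$. For $t_0=4\bar\varepsilon/\delta$ this flow leaves $0$ and $u_1$ fixed and satisfies $\eta(t_0,\{J\leq c+\bar\varepsilon\})\subset\{J\leq c-\bar\varepsilon\}$. Choosing $g\in\Gamma$ with $\max_t J(g(t))<c+\bar\varepsilon$ (possible by definition of $c$ as an infimum), the deformed path $\tilde g(t):=\eta(t_0,g(t))$ still lies in $\Gamma$, but $\max_t J(\tilde g(t))\leq c-\bar\varepsilon$, contradicting the definition of $c$.

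The main obstacle is the construction of the deformation, and in particular the uniform lower bound on $\|J'\|$ in a neighborhood of the level $c$: without the (PS) assumption one cannot rule out a sequence $u_n$ with $J(u_n)\to c$ and $J'(u_n)\to 0$ whose limit is a critical point precisely at the level we are trying to realize. This is why (PS) is the indispensable hypothesis and the rest of the argument is essentially qualitative.
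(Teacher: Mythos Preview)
Your sketch is the classical Ambrosetti--Rabinowitz argument via the quantitative deformation lemma, and it is essentially correct. Note, however, that the paper does \emph{not} supply its own proof of this statement: the Mountain Pass Lemma is quoted as a preliminary tool with a reference to \cite{Ma}, so there is nothing to compare your argument against within the paper itself.

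A couple of small technical points on your sketch. First, the pseudo-gradient inequalities are slightly off in normalization: the standard construction gives $\|V(u)\|\leq 2\|J'(u)\|$ together with $\langle J'(u),V(u)\rangle\geq \|J'(u)\|^{2}$, and only after dividing by $\|V(u)\|$ (or a similar rescaling) do you obtain a bounded field with $\langle J'(u),W(u)\rangle\geq \tfrac{1}{2}\|J'(u)\|$. This affects the exact constant in $t_{0}$ but not the logic. Second, to guarantee that the deformed path stays in $\Gamma$ you need the cutoff $\psi$ to vanish on the sublevel set $\{J\leq c-2\bar\varepsilon\}$, which contains both $0$ and $u_{1}$ by your choice of $\bar\varepsilon$; you state this conclusion but it is worth making the mechanism explicit. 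With these cosmetic adjustments your outline is the standard proof found in \cite{Ma} and in the original Ambrosetti--Rabinowitz paper.
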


\begin{theorem}
\label{coercivity}\cite{Ma}\label{LematCritPoint}If functional $%
J:E\rightarrow \mathbb{R}$, $J$ is weakly lower semi-continuous and
coercive, i.e. $\lim_{||x||\rightarrow \infty }J(x)=+\infty $, then there
exist $x_{0}$ such that 
\begin{equation*}
\inf_{x\in E}J(x)=J(x_{0})
\end{equation*}%
and $x_{0}$ is also a critical point of $J$, i.e. $J^{^{\prime }}(x_{0})=0$.
Moreover, if $J$ is strictly convex, then a critical point is unique.
\end{theorem}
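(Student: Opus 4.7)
The plan is the standard direct method of the calculus of variations. Set $m := \inf_{x \in E} J(x)$; the upper bound $m \leq J(0) < \infty$ is trivial. Pick a minimizing sequence $(x_n) \subset E$ with $J(x_n) \to m$. Coercivity forces $(x_n)$ to be bounded, for otherwise a subsequence with $\|x_{n_k}\| \to \infty$ would yield $J(x_{n_k}) \to +\infty$, incompatible with $J(x_n) \to m < \infty$.

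Since $E$ is reflexive, the Eberlein--\v{S}mulian theorem provides a subsequence $x_{n_k} \rightharpoonup x_0$ converging weakly in $E$. Weak lower semicontinuity then gives
\[
J(x_0) \leq \liminf_{k \to \infty} J(x_{n_k}) = m,
\]
and combined with $J(x_0) \geq m$ this forces $J(x_0) = m$ (so in particular $m$ is finite), exhibiting $x_0$ as a global minimizer. Because $J$ is G\^ateaux differentiable, the scalar function $t \mapsto J(x_0 + tv)$ attains a minimum at $t = 0$ for every $v \in E$, whence $\langle J'(x_0), v \rangle = 0$; that is, $x_0$ is a critical point.

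For the uniqueness statement, assume $J$ is strictly convex and let $x_0 \neq x_1$ both attain $m$. Strict convexity yields
\[
J\!\left(\tfrac{1}{2}x_0 + \tfrac{1}{2}x_1\right) < \tfrac{1}{2}J(x_0) + \tfrac{1}{2}J(x_1) = m,
\]
contradicting the definition of $m$. The only conceptually nontrivial step in the whole argument is the extraction of a weakly convergent subsequence via reflexivity; every other step is routine bookkeeping, which is why this result is typically presented as the template form of the direct method rather than as a deep stand-alone theorem.
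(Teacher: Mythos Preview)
The paper does not supply its own proof of this theorem; it is quoted as a standard result from \cite{Ma}. Your argument is the classical direct-method proof and is correct.

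One small remark on the uniqueness clause: the statement asserts uniqueness of \emph{critical points}, not merely of minimizers, whereas you only compare two points that both attain $m$. To close this gap, recall that for a G\^ateaux differentiable convex functional one has $J(y)\geq J(x)+\langle J'(x),y-x\rangle$ for all $x,y$; hence if $J'(x_1)=0$ then $x_1$ is already a global minimizer, and your strict-convexity argument applies. This is routine, but worth one line so that the proof matches the statement exactly.
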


\section{Existence of solution by a direct method and a mountain pass lemma}

In this section we are concerned with the applications of Theorems \ref%
{coercivity} and \ref{mountain} in order to get the existence results. We
introduce firstly some assumptions.

\textbf{H1} $a:\left[ 1,T\right] \rightarrow \mathbb{R}_{+}$\textit{, }$b:%
\left[ 1,T\right] \rightarrow \mathbb{R}$\textit{\ are such that }%
\begin{equation*}
|f(k,t)|\leq a(k)|t|^{q(k)}+b(k)\text{ \textit{for all} }t\in \mathbb{R}%
\text{ \textit{and all} }k\in \left[ 1,T\right] \text{.}
\end{equation*}

\textbf{H2} $a_{1}:\left[ 1,T\right] \rightarrow \mathbb{R}_{+}$\textit{, }$%
b_{1}:\left[ 1,T\right] \rightarrow\mathbb{R}$\textit{\ are such that }%
\begin{equation*}
|f(k,t)|\geq a_{1}(k)|t|^{q(k)}+b_{1}(k)\text{ \textit{for all }}t\in\mathbb{%
R}\text{ \textit{and all} }k\in \left[ 1,T\right] \text{.}
\end{equation*}

\textbf{H3 }$c:\left[ 1,T\right] \rightarrow\mathbb{R}_{+}$\textit{\ is such
that }%
\begin{equation*}
|f(k,t)|\geq c(k)|t|^{q(k)}\text{ \textit{for all} }t\in \mathbb{R}\text{ 
\textit{and all} }k\in \left[ 1,T\right] \text{.}
\end{equation*}

\textbf{H4 }$c_{1}:\left[ 1,T\right] \rightarrow\mathbb{R}_{+}$\textit{\ is
such that }%
\begin{equation*}
|f(k,t)|\leq c_{1}(k)|t|^{q(k)}\text{ \textit{for all} }t\in \mathbb{R}\text{
\textit{and all} }k\in \left[ 1,T\right] \text{.}
\end{equation*}

\begin{theorem}
\label{th1} Assume that condition\textbf{\ H1} holds. Then

\begin{itemize}
\item[(\protect\ref{th1}.1)] functional $J_{\lambda }$ is coercive provided $%
p^{-}>q^{+}+1$;

\item[(\protect\ref{th1}.2)] if $p^{-}=q^{+}+1$, then there is $\lambda
^{\star }$ such that for any $\lambda \in (0,\lambda ^{\star })$ functional $%
J_{\lambda }$ is coercive.
\end{itemize}
\end{theorem}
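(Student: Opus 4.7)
The plan is to build, for $||u|| > 1$, a lower bound of the shape
\begin{equation*}
J_{\lambda}(u) \geq A\,||u||^{p^-} - \lambda B\,||u||^{q^++1} - \lambda D\,||u|| - E,
\end{equation*}
with $A, B, D, E > 0$ independent of $u$ and $\lambda$. Once this is in hand, coercivity in case (\ref{th1}.1) is immediate because $p^- > q^+ + 1 > 1$ makes the first term strictly dominant for every $\lambda > 0$, while in case (\ref{th1}.2) the exponents coincide and coercivity reduces to $A - \lambda B > 0$, giving the threshold $\lambda^{\star} := A/B$.

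For the leading sum I would apply Lemma \ref{lem1}(a) together with $1/p(k-1) \geq 1/p^+$ to obtain $\sum_{k=1}^{T+1} \frac{1}{p(k-1)}|\Delta u(k-1)|^{p(k-1)} \geq \frac{C_1}{p^+}||u||^{p^-} - \frac{C_2}{p^+}$ for $||u|| > 1$, which supplies $A$. For the nonlinear part, integrate H1 in $t$ to get $|F(k,t)| \leq \frac{a(k)}{q(k)+1}|t|^{q(k)+1} + |b(k)|\,|t|$, and kill the variable exponent through the trivial pointwise inequality $|u(k)|^{q(k)+1} \leq 1 + |u(k)|^{q^++1}$ (valid separately on $\{|u(k)|\leq 1\}$ and on $\{|u(k)|>1\}$). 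Then Lemma \ref{lem1}(b) with $m = q^+ + 1$, admissible because $p^- > 1$ forces $q^+ > 1$ and so $m \geq 2$, combined with the right-hand estimate in \eqref{relation_m}, converts $\sum_{k=1}^T |u(k)|^{q^++1}$ into a multiple of $||u||^{q^++1}$, yielding $B$. The same chain with $m = 2$ plus a Cauchy--Schwarz step on $\sum_{k=1}^T |u(k)| \leq \sqrt{T}\,(\sum |u(k)|^2)^{1/2}$ gives a multiple of $||u||$, yielding $D$. The constant $E$ collects the remaining $\lambda$-independent pieces together with the $\lambda$-scaled contributions from the ``$+1$'' in the variable-exponent reduction.

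Plugging these estimates back into $J_{\lambda}$ produces the displayed inequality and both parts of the theorem follow by comparison of exponents; the explicit formula for $\lambda^{\star}$ in (\ref{th1}.2) can then be read off from $A$ and $B$. The work is genuinely just bookkeeping, so the only real subtlety is the order of operations: one must pass from the variable exponent $q(k)+1$ to the fixed exponent $q^{+}+1$ \emph{before} invoking Lemma \ref{lem1}(b), which is stated for a single $m$, and one must carefully track which of the constants depend on $\lambda$ in order to isolate the correct coefficient of $||u||^{p^-}$ in the borderline case.
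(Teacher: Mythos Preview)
Your proof follows essentially the same route as the paper's: estimate $\sum_{k} F(k,u(k))$ via \textbf{H1} and Lemma~\ref{lem1}(b), estimate the leading term via Lemma~\ref{lem1}(a), and then compare the exponents $p^{-}$ and $q^{+}+1$. If anything you are more careful than the paper---you make the passage from $|u(k)|^{q(k)+1}$ to $|u(k)|^{q^{+}+1}$ explicit via the ``$1+$'' trick and you handle the linear term with Cauchy--Schwarz and $m=2$ rather than formally invoking Lemma~\ref{lem1}(b) at $m=1$; the only slip is that you first declare $E$ independent of $\lambda$ and then let it absorb $\lambda$-scaled constants, but this is harmless since $E$ never enters the $A-\lambda B$ comparison in the borderline case.
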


\begin{proof}
Note that%
\begin{equation*}
\begin{array}{l}
\left\vert \sum_{k=1}^{T}F(k,u(k))\right\vert \leq \sum_{k=1}^{T}\left\vert
\int_{0}^{u(k)}f(k,t)dt\right\vert \leq
\sum_{k=1}^{T}\int_{0}^{u(k)}|f(k,t)|dt\leq \bigskip \\ 
\sum_{k=1}^{T}\int_{0}^{u(k)}(a(k)|t|^{q(k)}+b(k))dt=\sum_{k=1}^{T}\frac{%
a(k)|u(k)|^{q(k)+1}}{q(k)+1}+\sum_{k=1}^{T}b(k)u(k)\leq \bigskip \\ 
\frac{a^{+}}{q^{-}+1}\sum_{k=1}^{T}|u(k)|^{q^{+}+1}+b^{+}%
\sum_{k=1}^{T}|u(k)|\leq \bigskip \\ 
\frac{a^{+}c_{q^{+}+1}}{q^{-}+1}\sum_{k=1}^{T+1}|\Delta
u(k)|^{q^{+}+1}+b^{+}c_{1}\sum_{k=1}^{T}|\Delta u(k)|,%
\end{array}%
\end{equation*}%
where constants $c_{q^{+}+1}$ and $c_{1}$ in the last inequality follow from
lemma \ref{lem1}(b). Using the above estimation we obtain%
\begin{equation*}
\begin{array}{l}
J_{\lambda }(u)\geq \sum_{k=1}^{T+1}\frac{1}{p(k-1)}|\Delta
u(k-1)|^{p(k-1)}-\lambda \frac{a^{+}c_{q^{+}+1}}{q^{-}}\sum_{k=1}^{T+1}|%
\Delta u(k)|^{q^{+}+1}\bigskip \\ 
-\lambda b^{+}c_{1}\sum_{k=1}^{T}|\Delta u(k)|\bigskip%
\end{array}%
\end{equation*}%
Using Lemma \ref{lem1}(a) we have 
\begin{equation*}
\sum_{k=1}^{T+1}|\Delta u(k-1)|^{p(k-1)}\geq C_{1}||u||^{p^{-}}-C_{2}.
\end{equation*}%
Hence%
\begin{equation*}
\begin{array}{c}
J_{\lambda }(u)\geq \frac{C_{1}}{p^{+}}||u||^{p^{-}}-C_{2}-\lambda \frac{%
a^{+}c_{q^{+}+1}}{q^{-}+1}\sum_{k=1}^{T+1}|\Delta u(k)|^{q^{+}+1}-\lambda
b^{+}c_{1}\sum_{k=1}^{T}|\Delta u(k)|.\bigskip%
\end{array}%
\end{equation*}%
Thus $J_{\lambda }(u)\rightarrow \infty $ as $||u||\rightarrow \infty $ in
case $p^{-}>q^{+}+1$.\bigskip

Let us now assume that $p^{-}=q^{+}+1$. Then $J_{\lambda }(u)\rightarrow
\infty $ as $||u||\rightarrow \infty $ in case%
\begin{equation*}
\frac{C_{1}}{p^{+}}-\lambda \frac{a^{+}c_{q^{+}+1}}{q^{-}+1}>0\text{.}
\end{equation*}%
Thus there is $\lambda ^{\star }$ such that for any $\lambda \in (0,\lambda
^{\star })$ the functional $J(u)$ is coercive. In fact we may take $\lambda
^{\star }=\frac{C_{1}\left( q^{-}+1\right) }{p^{+}a^{+}c_{q^{+}+1}}$.
\end{proof}

Now, we immediately obtain the following existence result.

\begin{theorem}
Assume that condition \textbf{H1} holds and let $f\left( k,0\right) \neq 0$
for at least one $k\in \left[ 1,T\right] $. Then problem (\ref{zad}) has at
least one nontrivial solution for all $\lambda >0$ provided $p^{-}>q^{+}+1$.
When $p^{-}=q^{+}+1$, there is $\lambda ^{\star }$ such that for any $%
\lambda \in (0,\lambda ^{\star })$ problem (\ref{zad}) has at least one
nontrivial solution.
\end{theorem}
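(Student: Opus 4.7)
The plan is to apply the direct method via Theorem \ref{coercivity} and then rule out the trivial minimizer using the hypothesis that $f(k,0)\neq 0$ at some node.

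First I would note that $H$ is finite-dimensional, so $J_\lambda$, being continuous as a finite sum of continuous functions of the values $u(k)$, is automatically weakly lower semi-continuous. Under hypothesis \textbf{H1}, Theorem \ref{th1} supplies coercivity of $J_\lambda$ in both regimes of interest: for every $\lambda>0$ when $p^->q^++1$, and for $\lambda\in(0,\lambda^\star)$ when $p^-=q^++1$. Applying Theorem \ref{coercivity} therefore yields a global minimizer $u_0\in H$; by that same theorem $u_0$ is a critical point of $J_\lambda$, hence a weak solution of (\ref{zad}), and the summation-by-parts argument already recorded in Section 2 upgrades it to a strong solution.

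What remains is to guarantee $u_0\neq 0$, i.e., to exhibit some point where $J_\lambda$ is strictly negative. I would pick $k_0\in[1,T]$ with $f(k_0,0)\neq 0$, set $\sigma:=f(k_0,0)/|f(k_0,0)|$, and define $v\in H$ by $v(k_0)=\sigma$ and $v(k)=0$ otherwise (which is compatible with $v(0)=v(T+1)=0$). For $0<t<1$ only the two increments $\Delta v(k_0-1)$ and $\Delta v(k_0)$ contribute to the first sum in $J_\lambda(tv)$, and because $t^{p(k-1)}\le t^{p^-}$ on $(0,1)$ that sum is bounded above by $Ct^{p^-}$ for some constant $C$ depending only on $p^{\pm}$. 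The second sum collapses to $\lambda F(k_0,t\sigma)$, and continuity of $f(k_0,\cdot)$ at $0$ together with the fundamental theorem of calculus gives
\begin{equation*}
F(k_0,t\sigma)=f(k_0,0)\,t\sigma+o(t)=|f(k_0,0)|\,t+o(t).
\end{equation*}
Hence $J_\lambda(tv)\le Ct^{p^-}-\lambda|f(k_0,0)|\,t+o(t)$. Because $p^->1$, the linear descent dominates the superlinear growth and the right-hand side is strictly negative for all sufficiently small $t>0$, so $J_\lambda(u_0)\le J_\lambda(tv)<0=J_\lambda(0)$ and $u_0\neq 0$.

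The only delicate step is this last linearization: one must confirm that the $o(t)$ remainder from the expansion of $F(k_0,\cdot)$ together with the $t^{p^-}$ growth cannot absorb the linear term of slope $-\lambda|f(k_0,0)|<0$. Given $p^->1$ and continuity of $f(k_0,\cdot)$ at $0$, this is routine, so the whole argument amounts to a clean appeal to Theorems \ref{th1} and \ref{coercivity} together with a one-node bump test function to detect the nontriviality.
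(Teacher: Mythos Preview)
Your proposal is correct and follows the same route as the paper: invoke Theorem~\ref{th1} for coercivity of $J_\lambda$ and then Theorem~\ref{coercivity} for the existence of a minimizer, which is a critical point and hence a solution. The paper's own proof is just a two-line appeal to these results and leaves the nontriviality entirely implicit in the hypothesis $f(k,0)\neq 0$; you supply the missing detail with the bump test function $tv$, which is fine. A slightly quicker way to reach the same conclusion is to note directly that $\langle J_\lambda'(0),v\rangle=-\lambda\sum_{k=1}^{T}f(k,0)v(k)\neq 0$ for $v=e_{k_0}$, so $0$ cannot be a critical point and therefore is not the minimizer; this avoids the Taylor expansion of $F(k_0,\cdot)$ and the $t^{p^-}$ versus $t$ comparison, but your argument is equivalent in spirit.
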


\begin{proof}
Indeed, functional $J_{\lambda }$ is continuous differentiable in the sense
of G\^{a}teaux. The assertion follows than by Theorem \ref{th1} and Theorem %
\ref{coercivity}.
\end{proof}

\begin{theorem}
\label{th2} Assume that condition \textbf{H2} holds. Then

\begin{itemize}
\item[(\protect\ref{th2}.1)] functional $J_{\lambda }$ is anti-coercive
provided $p^{+}<q^{-}+1$;

\item[(\protect\ref{th2}.2)] if $p^{+}=q^{-}+1$, then there is $\lambda
^{\star }$ such that for any $\lambda >\lambda ^{\star }$ the functional $%
J_{\lambda }$ is anti-coercive.
\end{itemize}
\end{theorem}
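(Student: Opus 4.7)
My plan is to mirror the argument of Theorem \ref{th1} with the roles of upper and lower bounds reversed. In Theorem \ref{th1}, H1 produced an \emph{upper} bound on $\left|\sum F(k,u(k))\right|$ which, combined with the \emph{lower} bound on the $p$-sum from Lemma \ref{lem1}(a), forced $J_\lambda(u)\to+\infty$. To prove anti-coercivity I will instead use H2 to produce a \emph{lower} bound on $\sum F(k,u(k))$ of order $||u||^{q^-+1}$, and pair it with an \emph{upper} bound on the $p$-sum of order $||u||^{p^+}$; the conclusion will then follow by comparing the exponents $p^+$ and $q^-+1$.

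For the $p$-sum I would exploit the elementary inequality $x^{p(k-1)}\leq 1+x^{p^+}$ (valid for every $x\geq 0$ by splitting on whether $x\leq 1$) together with the right-hand inequality in (\ref{relation_m}) to obtain
$$\sum_{k=1}^{T+1}\frac{1}{p(k-1)}|\Delta u(k-1)|^{p(k-1)} \leq \frac{T+1}{p^-}\bigl(1+||u||^{p^+}\bigr).$$
For the $F$-sum I would integrate the lower bound of H2 along exactly the chain of inequalities of the proof of Theorem \ref{th1}, but with the inequalities reversed, to arrive at
$$\sum_{k=1}^{T}F(k,u(k)) \geq \frac{a_1^-}{q^++1}\sum_{k=1}^{T}|u(k)|^{q^-+1} - \sum_{k=1}^{T}|b_1(k)|\,|u(k)|,$$
and then use the equivalence of norms on the finite-dimensional space $H$ to replace $\sum_{k=1}^T |u(k)|^{q^-+1}$ by a positive constant multiple of $||u||^{q^-+1}$.

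Combining the two pieces yields an inequality of the form $J_\lambda(u) \leq A\,||u||^{p^+}+B-\lambda C\,||u||^{q^-+1}+\lambda D\,||u||$ with strictly positive constants $A,B,C,D$ independent of $u$. In the first regime $p^+<q^-+1$, the negative $q^-+1$ term dominates for every $\lambda>0$, so $J_\lambda(u)\to-\infty$. In the borderline case $p^+=q^-+1$, anti-coercivity holds precisely when the leading net coefficient is negative, i.e.\ $\lambda>\lambda^\star:=A/C$, which can be written out explicitly in terms of the constants above.

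The main obstacle is the second step: H2 bounds $|f|$ from below, whereas anti-coercivity requires a lower bound on $F$ itself, not on $|F|$, and a large $|f|$ does not by itself yield a large $F$ when $f$ is allowed to change sign. This obstruction does not appear in Theorem \ref{th1} because $|F|\leq\int|f|$ is automatic there; overcoming it here — by applying H2 at the level of the antiderivative and tracking the sign of $b_1(k)$ and the direction of integration from $0$ to $u(k)$ — is what makes the anti-coercivity direction subtler than the coercivity one.
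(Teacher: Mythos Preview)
Your overall strategy --- bound the $p$-sum above by a constant times $||u||^{p^{+}}$, bound $\sum_{k}F(k,u(k))$ below by a constant times $||u||^{q^{-}+1}$ minus a linear term, and compare exponents --- is exactly what the paper does. The only cosmetic differences are that the paper passes from $\sum_{k}|u(k)|^{q^{-}+1}$ to $\sum_{k}|\Delta u(k-1)|^{q^{-}+1}$ via inequality~(\ref{eq1}) and then applies~(\ref{relation_m}), whereas you invoke finite-dimensional norm equivalence directly; and your elementary bound $x^{p(k-1)}\leq 1+x^{p^{+}}$ is actually more careful than the paper's passage $\sum_{k}\frac{1}{p(k-1)}|\Delta u(k-1)|^{p(k-1)}\leq \frac{1}{p^{-}}||u||_{p^{+}}^{p^{+}}$, which tacitly needs each $|\Delta u(k-1)|\geq 1$.

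The obstacle you raise in your last paragraph is genuine, and the paper does not address it either. Hypothesis~\textbf{H2} bounds $|f(k,t)|$ from below, not $f(k,t)$ itself, so one cannot conclude $F(k,u(k))\geq \frac{a_{1}(k)}{q(k)+1}|u(k)|^{q(k)+1}+b_{1}(k)u(k)$ without a sign hypothesis on $f$; indeed $|f|$ large permits $f$ large and negative, forcing $F(k,t)\to -\infty$ along some ray. The paper simply writes ``As in the proof of Theorem~\ref{th1} we see by~(\ref{eq1}) that $\sum_{k}F(k,u(k))\geq\ldots$'' and proceeds, so your displayed lower bound on $\sum_{k}F$ and the paper's are equally unjustified from \textbf{H2} as stated. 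Your proposed remedy of ``tracking the sign of $b_{1}(k)$ and the direction of integration'' cannot close this gap in general; what is needed is to read \textbf{H2} as a lower bound on $f$ (or on $tf(k,t)$), which is presumably the intended hypothesis.
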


\begin{proof}
As in the proof of Theorem \ref{th1} we see by (\ref{eq1}) that%
\begin{equation*}
\begin{array}{l}
\sum_{k=1}^{T}F(k,u(k))\geq \bigskip \\ 
\frac{a_{1}^{-}}{(q^{+}+1)2^{q^{-}+1}}\sum_{k=1}^{T}|\Delta
u(k-1)|^{q^{-}+1}+\frac{b_{1}^{-}}{2}\sum_{k=1}^{T}|\Delta u(k-1)|.%
\end{array}%
\end{equation*}%
Using the above we have for some constant $\overline{c}$ 
\begin{equation*}
J_{\lambda }(u)\leq \frac{1}{p^{+}}||u|| _{p^{+}}^{p^{+}}-\lambda \frac{%
a_{1}^{-}}{(q^{+}+1)2^{q^{-}+1}}||u|| _{q^{-}+1}^{q^{-}+1}+\overline{c}%
\bigskip ||u||\text{.}
\end{equation*}%
Hence by (\ref{relation_m}) we see that 
\begin{equation}
J_{\lambda }(u)\leq \frac{T+1}{p^{-}}||u||^{p^{+}}-\frac{\lambda
a_{1}^{-}(T+1)^{\frac{2}{2-q^{-}+1}}}{(q^{+}+1)2^{q^{-}+1}}||u||^{q^{-}+1}+%
\overline{c}\bigskip ||u||.  \label{oszac_z_goryMPL}
\end{equation}%
This inequality provides the assertion.
\end{proof}

The existence result immediately follows.

\begin{theorem}
Assume that condition \textbf{H2} holds and let $f\left( k,0\right) \neq 0$
for at least one $k\in \left[ 1,T\right] $. Then problem (\ref{zad}) has at
least one nontrivial solution for all $\lambda >0$ provided $p^{+}<q^{-}+1$.
When $p^{+}=q^{-}+1$, there is $\lambda ^{\star }$ such that for any $%
\lambda >\lambda ^{\star }$ problem (\ref{zad}) has at least one nontrivial
solution.
\end{theorem}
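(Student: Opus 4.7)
The plan is to obtain a nontrivial critical point of $J_\lambda$ by applying Theorem \ref{coercivity} to the functional $-J_\lambda$, mimicking the strategy of the previous existence theorem but in a ``dual'' form. The anti-coercivity of $J_\lambda$ supplied by Theorem \ref{th2} is precisely the coercivity of $-J_\lambda$, and it holds in exactly the range of $\lambda$ stated in the present theorem: for every $\lambda>0$ when $p^{+}<q^{-}+1$, and for $\lambda>\lambda^{\star}$ when $p^{+}=q^{-}+1$.

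First I would verify the hypotheses of Theorem \ref{coercivity} for $-J_\lambda$. Since $H$ is finite-dimensional, weak and strong convergence coincide, and the continuity of $J_\lambda$ (a finite sum of continuous functions of the coordinates of $u$) immediately yields the weak lower semi-continuity of $-J_\lambda$; coercivity of $-J_\lambda$ is Theorem \ref{th2}. Theorem \ref{coercivity} then produces a point $u^{\star}\in H$ at which $-J_\lambda$ attains its infimum, equivalently $J_\lambda$ attains its supremum over $H$, and $u^{\star}$ is a critical point of $J_\lambda$. By the variational framework of Section 2, $u^{\star}$ is therefore a (strong) solution of (\ref{zad}).

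The remaining point is to rule out $u^{\star}=0$. The G\^ateaux derivative at $0$ reads
\[
\langle J_\lambda'(0),v\rangle=-\lambda\sum_{k=1}^{T}f(k,0)\,v(k),
\]
and by the hypothesis that $f(k_{0},0)\neq 0$ for some $k_{0}\in[1,T]$ this linear form is nonzero (take $v$ equal to $1$ at $k_{0}$ and $0$ elsewhere). Hence $0$ is not a critical point of $J_\lambda$, so $u^{\star}\neq 0$; equivalently, moving from $0$ along $-\nabla J_\lambda(0)$ shows that $J_\lambda$ takes strictly positive values arbitrarily near the origin, so the supremum of $J_\lambda$ exceeds $J_\lambda(0)=0$. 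I do not anticipate any genuine obstacle: compactness and semi-continuity are trivial in the finite-dimensional setting, and the analytic content is already carried by the estimate (\ref{oszac_z_goryMPL}) in Theorem \ref{th2}; the only bookkeeping is to confirm that the range of $\lambda$ in the conclusion agrees with the one in which anti-coercivity was established.
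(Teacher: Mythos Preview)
Your proposal is correct and follows the same route the paper implicitly takes: the paper simply states that ``the existence result immediately follows'' from the anti-coercivity of $J_\lambda$ established in Theorem \ref{th2}, by analogy with the preceding existence theorem, and you have spelled out those details---applying Theorem \ref{coercivity} to $-J_\lambda$ and using $f(k_0,0)\neq 0$ to rule out the trivial critical point. There is no substantive difference in approach.
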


When $f\left( k,0\right) =0$ for all $k\in \left[ 1,T\right] $ Theorem \ref%
{coercivity} yields the existence of at least one solution which in this
case may become trivial. So that we will use some different methodology
pertaining to Theorem \ref{mountain}. This however requires some additional
assumptions.

\begin{corollary}
\label{cor1}

\begin{itemize}
\item[(i)] Assume that conditions \textbf{H1}, \textbf{H3} hold and let $%
p^{-}>q^{+}+1$. Then for any $M<0$ there is positive $\lambda ^{\star }$
such that for any $\lambda >\lambda ^{\star }$ and for all $||u||=1$ we have 
$J_{\lambda }(u)\leq M$.

\item[(ii)] Assume that conditions \textbf{H2}, \textbf{H4} hold and let $%
q^{-}+1>p^{+}$. Then there is $\lambda ^{\star }$, $t>0$ and $M>0$ such that
for any $\lambda \in (0,\lambda ^{\star })$ and for all $||u||=t$ we have $%
J_{\lambda }(u)\geq M.$
\end{itemize}
\end{corollary}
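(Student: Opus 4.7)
The two parts share a common strategy: on a well-chosen sphere $\|u\|=r$, split $J_\lambda(u)$ into the $p$-power ``kinetic'' sum and the $\lambda$-weighted $F$-term, then tune $\lambda$ so that for (i) the $F$-term dominates (pushing $J_\lambda$ below $M$) and for (ii) the kinetic sum dominates (keeping $J_\lambda$ above $M$).

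For part (i), I would work on the unit sphere $S=\{u\in H:\|u\|=1\}$. Since $H$ is finite-dimensional, $S$ is compact, so the continuous map $u\mapsto \sum_{k=1}^{T+1}\frac{1}{p(k-1)}|\Delta u(k-1)|^{p(k-1)}$ attains some maximum $A>0$ on $S$ (condition \textbf{H1} is used only to guarantee that $F$ is well-defined and continuous). For the $F$-term I would imitate the calculation from the proof of Theorem \ref{th2}: using \textbf{H3} one obtains
\[
\sum_{k=1}^T F(k,u(k))\ge \frac{1}{q^++1}\sum_{k=1}^T c(k)\,|u(k)|^{q(k)+1},
\]
a continuous function of $u$ which is strictly positive away from the origin. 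Its minimum $\mu$ on the compact sphere $S$ is therefore strictly positive, so $J_\lambda(u)\le A-\lambda \mu$ uniformly on $S$. Choosing $\lambda^\star=(A-M)/\mu$ then finishes (i).

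For part (ii), I would work on a small sphere $\|u\|=t$ and use (\ref{relation_m}) together with $|u(k)|\le \sqrt{T+1}\,\|u\|$ to extract clean powers of $\|u\|$. Picking $t$ small enough that $|\Delta u(k-1)|\le 1$ and $|u(k)|\le 1$ for all $k$ ensures $|\Delta u(k-1)|^{p(k-1)}\ge |\Delta u(k-1)|^{p^+}$ and $|u(k)|^{q(k)+1}\le |u(k)|^{q^-+1}$. Hence the kinetic part is bounded below by $\tilde A\,\|u\|^{p^+}$, while \textbf{H4} combined with Lemma \ref{lem1}(b) yields $\bigl|\sum_{k=1}^T F(k,u(k))\bigr|\le \tilde B\,\|u\|^{q^-+1}$ for suitable constants $\tilde A,\tilde B>0$. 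On $\|u\|=t$ one then has
\[
J_\lambda(u)\ge t^{p^+}\bigl(\tilde A-\lambda \tilde B\,t^{q^-+1-p^+}\bigr),
\]
and since $q^-+1-p^+>0$, fixing any such $t$ and setting $\lambda^\star=\tilde A/(2\tilde B\,t^{q^-+1-p^+})$ and $M=\tilde A\,t^{p^+}/2$ gives $J_\lambda(u)\ge M$ whenever $\lambda\in(0,\lambda^\star)$ and $\|u\|=t$.

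The main obstacle in both parts is the asymmetry between the one-sided estimate on $F$ that one actually needs and the two-sided hypothesis on $|f|$ supplied by \textbf{H2} and \textbf{H3}: passing from a lower bound on $|f(k,t)|$ to a lower bound on the signed quantity $F(k,u(k))$ requires the same implicit sign convention already used inside the proof of Theorem \ref{th2}, which I would flag carefully when writing out the details.
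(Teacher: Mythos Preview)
Your proposal is correct and covers both parts, whereas the paper only proves (i) and leaves (ii) to the reader. The arguments are sound (including the observation that for $u\neq 0$ in $H$ some $u(k)$ with $k\in[1,T]$ is nonzero, so the minimum $\mu$ in your part (i) is indeed strictly positive), and you rightly flag the sign convention needed to pass from \textbf{H3}/\textbf{H2} to a lower bound on the signed quantity $\sum_k F(k,u(k))$; the paper relies on precisely this convention inside the proof of Theorem~\ref{th2} without comment.

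For (i) your route is genuinely different from the paper's. The paper simply quotes the explicit upper bound \eqref{oszac_z_goryMPL} already obtained in the proof of Theorem~\ref{th2} (with $c$ playing the role of $a_1$ and no $b_1$-term), evaluates it at $\|u\|=1$, and solves for $\lambda$; this yields the concrete value $\lambda^{\star}=\frac{(-M+\frac{T+1}{p^{-}})(q^{+}+1)2^{q^{-}+1}}{c^{-}(T+1)^{2/(2-q^{-}+1)}}$. Your compactness argument on the finite-dimensional unit sphere is shorter and more robust---it sidesteps the chain of norm inequalities \eqref{eq1}, \eqref{relation_m}---but it does not produce an explicit $\lambda^{\star}$, which the paper then carries into the next corollary. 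For (ii) your scaling argument (choose $t$ small so that all increments and values lie in $[-1,1]$, then compare powers $p^{+}$ and $q^{-}+1$) is essentially the natural mirror of the paper's computation for (i); the paper gives no proof of (ii), so there is nothing to compare against.
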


\begin{proof}
We will prove only the first assertion. Let us fix any $M>0$. Note that by
the proof of Theorem \ref{th2}, see relation (\ref{oszac_z_goryMPL}), we
obtain that for all $||u||=1$%
\begin{equation*}
J_{\lambda }\left( u\right) \leq \frac{T+1}{p^{-}}-\frac{\lambda c^{-}(T+1)^{%
\frac{2}{2-q^{-}+1}}}{(q^{+}+1)2^{q^{-}+1}}\leq M
\end{equation*}%
when $\lambda \geq \lambda ^{\ast }=\frac{\left( -M+\frac{T+1}{p^{-}}\right)
(q^{+}+1)2^{q^{-}+1}}{c^{-}(T+1)^{\frac{2}{2-q^{-}+1}}}$.
\end{proof}

\begin{corollary}
\begin{itemize}
\item 

\item[(i)] Assume that conditions \textbf{H1,H3} hold and let $p^{-}>q^{+}+1$
and $f\left( k,0\right) =0$ for $k\in \left[ 1,T\right] $. Then problem (\ref%
{zad}) has at least one nontrivial solution for all $\lambda >\lambda ^{\ast
}=\frac{\left( -M+\frac{T+1}{p^{-}}\right) (q^{+}+1)2^{q^{-}+1}}{c^{-}(T+1)^{%
\frac{2}{2-q^{-}+1}}}$.

\item[(ii)] Assume that conditions \textbf{H2}, \textbf{H4} hold and let $%
q^{-}+1>p^{+}$. Then there is $\lambda ^{\star }$ such that (\ref{zad}) has
at least one nontrivial solution for all $\lambda \in (0,\lambda ^{\star })$.
\end{itemize}
\end{corollary}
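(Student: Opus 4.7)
Both parts rely on the observation that $f(k,0)=0$, so $u\equiv 0$ is a trivial solution of~(\ref{zad}) and the task is to produce a critical point of $J_{\lambda}$ different from $0$. Note that in part~(ii) the equality $f(k,0)=0$ is forced by evaluating \textbf{H4} at $t=0$.

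For~(i), I would apply the direct method. Theorem~\ref{th1}, together with \textbf{H1} and $p^{-}>q^{+}+1$, gives coercivity of $J_{\lambda}$ on the finite-dimensional space $H$; the functional is continuous, hence weakly lower semi-continuous, so Theorem~\ref{coercivity} produces a global minimizer $x_{0}\in H$ which is automatically a critical point of $J_{\lambda}$. To rule out $x_{0}=0$ I would invoke Corollary~\ref{cor1}(i): choosing $\lambda>\lambda^{\star}$ with $M<0$ we get $J_{\lambda}(u)\leq M<0$ for every $u$ with $\|u\|=1$, hence $\inf_{H} J_{\lambda}\leq M<0=J_{\lambda}(0)$ and therefore $J_{\lambda}(x_{0})<0$, which forces $x_{0}\neq 0$. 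Summation by parts, as noted in Section~2, upgrades the weak solution to a strong one.

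For~(ii), I would instead invoke the mountain pass lemma (Theorem~\ref{mountain}). The identity $J_{\lambda}(0)=0$ is immediate. Corollary~\ref{cor1}(ii) supplies $t>0$ and $M>0$ such that $J_{\lambda}(u)\geq M$ for every $\|u\|=t$, which is exactly the second hypothesis with $\rho=t$ and $\alpha=M$. The third hypothesis is produced by Theorem~\ref{th2}: under \textbf{H2} and $p^{+}<q^{-}+1$ the functional $J_{\lambda}$ is anti-coercive, so any $u_{1}$ of sufficiently large norm satisfies $\|u_{1}\|\geq \rho$ and $J_{\lambda}(u_{1})<\alpha$.

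The one point that needs a brief argument is the Palais--Smale condition. Since $\dim H<\infty$, it suffices to show that a sequence $\{u_{n}\}\subset H$ with $\{J_{\lambda}(u_{n})\}$ bounded is norm-bounded: anti-coercivity forbids $\|u_{n}\|\to \infty$, after which Bolzano--Weierstrass provides a convergent subsequence and (PS) follows because $J_{\lambda}'$ is continuous. Theorem~\ref{mountain} then yields a critical value $c\geq M>0=J_{\lambda}(0)$, and the corresponding critical point is necessarily nontrivial. I do not foresee any serious obstacle; the only place where one has to be mildly careful is the (PS) verification, and in finite dimensions that reduces, as above, to observing that anti-coercivity prevents escape to infinity.
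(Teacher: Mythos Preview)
Your argument is correct in both parts. For part~(ii) it coincides with what the paper would do (the paper only spells out part~(i)): apply Theorem~\ref{mountain} directly to $J_\lambda$, with the sphere condition coming from Corollary~\ref{cor1}(ii), the far point $u_1$ from the anti-coercivity of Theorem~\ref{th2}, and (PS) from anti-coercivity plus finite dimensionality exactly as you describe.

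For part~(i), however, you take a genuinely different and more elementary route. The paper sets $E_\lambda=-J_\lambda$ and applies the mountain pass lemma to $E_\lambda$: coercivity of $J_\lambda$ (Theorem~\ref{th1}) becomes anti-coercivity of $E_\lambda$, which supplies both the point $u_1$ with $E_\lambda(u_1)<\alpha$ and the (PS) condition, while Corollary~\ref{cor1}(i) yields the sphere estimate $E_\lambda(u)=-J_\lambda(u)\geq -M>0$ on $\|u\|=1$. You instead minimize $J_\lambda$ directly via Theorem~\ref{coercivity} and use Corollary~\ref{cor1}(i) only to push the infimum strictly below $J_\lambda(0)=0$, thereby excluding the trivial minimizer. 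Your approach is shorter and avoids the mountain pass machinery altogether; the paper's approach has the mild structural advantage that parts~(i) and~(ii) become formally dual (mountain pass applied to $-J_\lambda$ and to $J_\lambda$, respectively).
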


\begin{proof}
We will show that all assumptions of Theorem \ref{mountain} are satisfied in
case of assertion (i). We put $E_{\lambda }=-J_{\lambda }$. Now we see that $%
E_{\lambda }$ is anti-coercive. Next we note that $E_{\lambda }\left(
0\right) =0$. Thus assumptions 1. and 3. of Theorem \ref{mountain} are
satisfied. Assumption 2. follows by Lemma \ref{cor1}. Hence the assertion of
the theorem follows.
\end{proof}

\section{General multiplicity result}

In this section we are concerned with the existence of multiple solutions.
We will use the version of the three critical point theorem. In order to do
so we shall somehow generalize the main result, namely Theorem 3 from \cite%
{CIT} which is proved for some special functional containing term $%
\left\Vert \cdot \right\Vert ^{p}$. We note that however, that this term can
be replaced by any nonnegative, coercive function being $0$ at $0$. Our
proof in fact follows the ideas employed in \cite{CIT}. Let us start from
the following abstract result which is used in the proof Theorem 3 from \cite%
{CIT} and which we also use.

\begin{theorem}
\label{abstract} Let $(X,\tau )$ be a Hausdorff space and $\Phi
,J:X\rightarrow \mathbb{R}$ be functionals; moreover, let $M$ be the
(possibly empty) set of all the global minimizers of $J$ and define 
\begin{equation*}
\alpha =\inf_{x\in X}\Phi (x),
\end{equation*}%
\begin{equation*}
\beta =\left\{ 
\begin{array}{ll}
\inf_{x\in M}\Phi (x),\mbox{ if }M\neq \emptyset , &  \\ 
\sup_{x\in X}\Phi (x),\mbox{ if }M=\emptyset . & 
\end{array}%
\right.
\end{equation*}%
Assume that the following conditions are satisfied:

\begin{itemize}
\item[(\protect\ref{abstract}.1)] for every $\sigma >0$ and every $\rho\in%
\mathbb{R}$ the set $\{x\in X:\Phi(x)+\sigma J(x)\leq\rho\}$ is sequentially
compact (if not empty);

\item[(\protect\ref{abstract}.2)] $\alpha<\beta$.
\end{itemize}

Then at least one of the following conditions holds:

\begin{itemize}
\item[(\protect\ref{abstract}.3)] there is a continuous mapping $%
h:(\alpha,\beta)\to X$ with the following property: for every $%
t\in(\alpha,\beta)$, one has 
\begin{equation*}
\Phi(h(t))=t
\end{equation*}
and for every $x\in\Phi^{-1}(t)$ with $x\neq h(t)$, 
\begin{equation*}
J(x)>J(h(t));
\end{equation*}

\item[(\protect\ref{abstract}.4)] there is $\sigma ^{\star }>0$ such that
the functional $\Phi +\sigma ^{\star }J$ admits at least two global
minimizers in $X$.
\end{itemize}
\end{theorem}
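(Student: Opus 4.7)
The plan is to prove the dichotomy by contradiction: assume (\ref{abstract}.4) fails, so for every $\sigma>0$ the functional $\Phi+\sigma J$ has at most one global minimizer, and then construct the map $h$ required in (\ref{abstract}.3). Sequential compactness of the sublevel sets $\{\Phi+\sigma J\leq\rho\}$ makes them sequentially closed, so $\Phi+\sigma J$ is sequentially lower semicontinuous; a minimizing sequence lies eventually in such a set, and extracting a convergent subsequence and applying lsc yields a global minimizer, unique by assumption. Call it $x_\sigma$.

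The second step is the standard monotonicity lemma: adding the defining minimizing inequalities for $x_{\sigma_1}$ and $x_{\sigma_2}$ with $0<\sigma_1<\sigma_2$ produces $(\sigma_2-\sigma_1)(J(x_{\sigma_2})-J(x_{\sigma_1}))\leq 0$, so $\sigma\mapsto J(x_\sigma)$ is non-increasing and $\sigma\mapsto\Phi(x_\sigma)$ is non-decreasing. Continuity of $\sigma\mapsto x_\sigma$ in $\tau$ comes next: for $\sigma_n\to\sigma_0>0$, monotonicity bounds $J(x_{\sigma_n})$ and the minimizing inequality bounds $\Phi(x_{\sigma_n})$, placing $\{x_{\sigma_n}\}$ in a sublevel set of $\Phi+(\sigma_0/2)J$. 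Any convergent subsequence $x_{\sigma_{n_k}}\to y$ satisfies, after passing to the limit in $\Phi(x_{\sigma_{n_k}})+\sigma_{n_k}J(x_{\sigma_{n_k}})\leq\Phi(z)+\sigma_{n_k}J(z)$ and using sequential lsc of $\Phi+\sigma_0 J$, that $y$ minimizes $\Phi+\sigma_0 J$; uniqueness forces $y=x_{\sigma_0}$.

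Asymptotic analysis: as $\sigma\to 0^+$, the estimate $\Phi(x_\sigma)\leq\Phi(z)+\sigma(J(z)-J(x_\sigma))$ together with $J(x_\sigma)\geq J(x_1)$ for $\sigma\leq 1$ forces $\limsup_{\sigma\to 0^+}\Phi(x_\sigma)\leq\inf_X\Phi=\alpha$. As $\sigma\to\infty$, the same inequality with $z$ chosen so that $\Phi(z)$ is close to $\beta$ and $J(z)$ close to $\inf J$ forces $J(x_\sigma)\to\inf J$ and $\Phi(x_\sigma)\to\beta$; when $M\neq\emptyset$ one concludes via sequential lsc of $J$ (which itself follows from sequential lsc of $\Phi+\sigma J$ for all $\sigma>0$) that limit points of $\{x_\sigma\}$ lie in $M$, while a parallel argument handles $M=\emptyset$. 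Combined with monotonicity and continuity, the intermediate value theorem shows that $\sigma\mapsto\Phi(x_\sigma)$ covers $(\alpha,\beta)$.

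For $t\in(\alpha,\beta)$, choose any $\sigma$ with $\Phi(x_\sigma)=t$ and set $h(t)=x_\sigma$. Well-definedness follows because if $\sigma_1<\sigma_2$ both give the same $t$, monotonicity forces $J(x_{\sigma_1})=J(x_{\sigma_2})$, so $x_{\sigma_1}$ also minimizes $\Phi+\sigma_2 J$, and uniqueness gives $x_{\sigma_1}=x_{\sigma_2}$. Continuity of $h$ transfers from continuity of $\sigma\mapsto x_\sigma$ via a subsequence argument on $t_n\to t_0$. The strict inequality $J(x)>J(h(t))$ for $x\in\Phi^{-1}(t)\setminus\{h(t)\}$ is immediate: $\Phi(x)+\sigma J(x)>\Phi(x_\sigma)+\sigma J(x_\sigma)$ by uniqueness, and $\Phi(x)=\Phi(x_\sigma)=t$ cancels to leave $J(x)>J(x_\sigma)$. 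The main obstacle is the continuity-and-surjectivity step, since $\Phi$ is not assumed lower semicontinuous on its own and one must combine sequential lsc of $\Phi+\sigma J$ with the monotonicity of $\sigma\mapsto J(x_\sigma)$ to pass to the limit in both variables.
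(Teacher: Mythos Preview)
The paper does not prove this statement: it is quoted verbatim as an abstract tool taken from \cite{CIT} (Cabada--Iannizzotto--Tersian) and used without proof in the argument for Theorem~\ref{abstrac1}. So there is no in-paper proof to compare your attempt against.

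Your outline follows the standard Ricceri-type scheme that underlies the result in \cite{CIT} and is essentially sound: unique minimizers $x_\sigma$ of $\Phi+\sigma J$, monotonicity of $\sigma\mapsto\Phi(x_\sigma)$ and $\sigma\mapsto J(x_\sigma)$, continuity of $\sigma\mapsto x_\sigma$, and inversion via the intermediate value theorem. Two places deserve more care before this becomes a proof rather than a sketch. First, neither $\Phi$ nor $J$ is assumed sequentially lower semicontinuous on its own, so ``continuity of $h$ transfers from continuity of $\sigma\mapsto x_\sigma$'' and ``sequential lsc of $J$ follows from sequential lsc of $\Phi+\sigma J$ for all $\sigma$'' are not immediate; the clean route is to observe that $m(\sigma)=\min_X(\Phi+\sigma J)$ is concave with derivative $m'(\sigma)=J(x_\sigma)$ (uniqueness of the minimizer forces differentiability), whence $\sigma\mapsto J(x_\sigma)$ and $\sigma\mapsto\Phi(x_\sigma)=m(\sigma)-\sigma m'(\sigma)$ are continuous without any appeal to regularity of $\Phi$ or $J$. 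Second, the endpoint limits $\Phi(x_\sigma)\to\alpha$ as $\sigma\to0^+$ and $\Phi(x_\sigma)\to\beta$ as $\sigma\to\infty$ must also cover the cases $\inf_X J=-\infty$ and $M=\emptyset$; in each of these one obtains the contradiction by trapping the family $\{x_\sigma\}$ in a fixed sublevel set of $\Phi+J$ (or $\Phi+\tau J$ for a suitable fixed $\tau$) and using its sequential compactness together with lsc, which your sketch gestures at but does not carry out.
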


\begin{theorem}
\label{abstrac1} Let $H$ be a finite dimensional Banach space. Assume that $%
J,\mu \in C^{1}(H)$, $\mu \left( x\right) \geq 0$ for all $x\in H$, $\mu
(0)=0$ and $\mu $ is coercive. Let $0<r<s$ be constants. Assume that

\begin{itemize}
\item[(\protect\ref{abstrac1}.1)] $\limsup_{\mu(u)\to\infty}\frac{J(u)}{%
\mu(u)}\geq 0$;

\item[(\protect\ref{abstrac1}.2)] $\inf_{u\in H}J(u)<\inf_{\mu(u)\leq s}J(u)$%
;

\item[(\protect\ref{abstrac1}.3)] $J(0)\leq\inf_{r\leq\mu(u)\leq s}J(u)$.
\end{itemize}

Then there is $\lambda^\star>0$ such that the functional $%
u\mapsto\mu(u)+\lambda^\star J(u)$ has at least three critical points in $H$.
\end{theorem}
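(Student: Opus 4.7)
The plan is to apply Theorem~\ref{abstract} with $\Phi:=\mu$ and $J$ as given, rule out its alternative (\ref{abstract}.3) using (\ref{abstrac1}.3), deduce from alternative (\ref{abstract}.4) that $\mu+\sigma^{\star}J$ has two distinct global minimizers for some $\sigma^{\star}>0$, and finally produce a third critical point via the Mountain Pass Lemma.

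I would begin by verifying the hypotheses of Theorem~\ref{abstract}. In the finite-dimensional space $H$, sequential compactness of $\{\mu+\sigma J\leq\rho\}$ reduces to coercivity of $\mu+\sigma J$, which follows from the coercivity of $\mu$ together with (\ref{abstrac1}.1) (allowing $\sigma J$ to be absorbed by an arbitrarily small fraction of $\mu$ at infinity). For the strict comparison $\alpha<\beta$: clearly $\alpha=\inf_H\mu=\mu(0)=0$, and if $M=\emptyset$ then $\beta=\sup\mu=+\infty$ by coercivity, while if $x\in M$ then $J(x)=\inf_HJ<\inf_{\mu(u)\leq s}J(u)$ by (\ref{abstrac1}.2), forcing $\mu(x)>s$, so $\beta\geq s>0=\alpha$.

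The main technical obstacle is to exclude (\ref{abstract}.3). If it held, the continuous selection $h:(0,\beta)\to H$ would produce a continuous function $\psi(t):=J(h(t))=\inf_{\mu(u)=t}J(u)$. Condition (\ref{abstrac1}.3) translates into $\psi\geq J(0)$ on $[r,s]$, since $\inf_{r\leq\mu(u)\leq s}J(u)=\min_{t\in[r,s]}\psi(t)$; on the other hand, (\ref{abstrac1}.2) yields some $u^{\star}$ with $\mu(u^{\star})>s$ and $J(u^{\star})<\inf_{\mu(u)\leq s}J(u)\leq J(0)$, whence $\psi(\mu(u^{\star}))<J(0)$. Combining the continuity of $\psi$ with the Lagrange-multiplier representation of $h(t)$ (each $h(t)$ being a critical point of $J+\lambda(t)\mu$) and the uniqueness imposed by (\ref{abstract}.3) should yield the required contradiction; this is the step I expect to require the most care.

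Thus (\ref{abstract}.4) must hold, providing $\sigma^{\star}>0$ and two distinct global minimizers $u_1\neq u_2$ of $\mu+\sigma^{\star}J$. Set $\lambda^{\star}:=\sigma^{\star}$. The functional $\mu+\lambda^{\star}J$ is $C^1$, coercive, and so satisfies the Palais--Smale condition on the finite-dimensional $H$. Applying Theorem~\ref{mountain} to $\mu+\lambda^{\star}J$ (after translating $u_1$ to the origin and subtracting the common minimum value) between the two valleys $u_1$ and $u_2$ produces a third critical point at the min-max level $c$; in the degenerate case when $c$ equals the common minimum value, a standard deformation argument forces a continuum of global minimizers. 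In every case $\mu+\lambda^{\star}J$ admits at least three critical points.
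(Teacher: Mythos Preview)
Your plan has a genuine gap at the step you yourself flag as delicate: with $\Phi=\mu$ you cannot rule out alternative (\ref{abstract}.3). The facts you extract from (\ref{abstrac1}.2) and (\ref{abstrac1}.3) --- that $\psi(t):=J(h(t))\ge J(0)$ on $[r,s]$ while $\psi(t_0)<J(0)$ for some $t_0>s$ --- are perfectly compatible with continuity of $\psi$, and the ``Lagrange-multiplier representation'' you invoke does not produce a contradiction. Concretely, take $H=\mathbb{R}$, $\mu(u)=u^2$, $r=1$, $s=4$, and any $C^1$ function $J$ with $J(0)=0$, $J>0$ on $(0,2]$, $J$ dipping to a minimum on $[3,4]$ and returning to $0$ for large $u$, and $J(-v)=J(v)+v^2$ for $v>0$. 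Then (\ref{abstrac1}.1)--(\ref{abstrac1}.3) hold, yet $h(t)=\sqrt{t}$ is a continuous selection with $\mu(h(t))=t$ and $J(\sqrt t)<J(-\sqrt t)$ for all $t\in(0,\beta)$, so (\ref{abstract}.3) holds and your argument stalls.

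The paper avoids this by \emph{not} taking $\Phi=\mu$. It defines
\[
\Phi(u)=\begin{cases}\mu(u),&\mu(u)<r,\\ r,&r\le\mu(u)\le s,\\ \mu(u)-s+r,&\mu(u)>s,\end{cases}
\]
so that $\Phi$ is constant on the whole annulus $\{r\le\mu\le s\}$. Now (\ref{abstract}.3) is excluded by a direct continuity argument: $\Phi(h(t))=t$ forces $\mu(h(t))<r$ for $t<r$ and $\mu(h(t))>s$ for $t>r$, so $\mu\circ h$ jumps across $[r,s]$ at $t=r$, contradicting continuity of $h$. Hypothesis (\ref{abstrac1}.3) is used at a \emph{different} point: once (\ref{abstract}.4) yields two global minimizers $u_1,u_2$ of $\Phi+\sigma^\star J$, one compares with $u=0$ (where $\Phi(0)=0$) to show $\mu(u_i)\notin[r,s]$; outside the annulus $\Phi$ differs from $\mu$ by a constant, so $u_1,u_2$ are local minimizers of $\mu+\sigma^\star J$, and the Mountain Pass argument finishes as you describe.
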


\begin{proof}
Define a continuous functional $\Phi :H\rightarrow 
\mathbb{R}
$ in the following way 
\begin{equation*}
\Phi (u)=\left\{ 
\begin{array}{ll}
\mu (u),\mbox{ if }\mu (u)<r, &  \\ 
r,\mbox{ if }r\leq \mu (u)\leq s, &  \\ 
\mu (u)-s+r,\mbox{ if }\mu (u)>s. & 
\end{array}%
\right.
\end{equation*}%
Let $M$, $\alpha $ and $\beta $ be defined as in Theorem \ref{abstract}. At
first we show that $\beta >r$. We consider two cases: \newline
\textbf{Case 1.} Assume that $M\neq \emptyset $. 
The set $M$ of minimizers of continuous functional $J$ is closed, and
therefore there is $\overline{u}\in M$ with $\Phi (\overline{u})=\beta $.
Hence by (\ref{abstrac1}.2) we obtain that $\mu (\overline{u})>s$, and thus 
\begin{equation*}
\beta =\Phi (\overline{u})=\mu (\overline{u})-s+r>r.
\end{equation*}%
\textbf{Case 2.} Assume that $M=\emptyset $. Then $\beta =\infty >r$.

Now we will show that the assumptions of Theorem \ref{abstract} are
fulfilled. Note that for any $\sigma >0$ we have 
\begin{equation*}
\begin{array}{l}
\bigskip \liminf_{\mu (u)\rightarrow \infty }(\Phi (u)+\sigma
J(u))=\liminf_{\mu (u)\rightarrow \infty }(\mu (u)-s+r+\sigma J(u))= \\ 
\bigskip \liminf_{\mu (u)\rightarrow \infty }\mu (u)\left( 1-\frac{s-r}{\mu
(u)}+\sigma \frac{J(u)}{\mu (u)}\right) \geq \\ 
\liminf_{\mu (u)\rightarrow \infty }\mu (u)\cdot \liminf_{\mu (u)\rightarrow
\infty }\left( 1-\frac{s-r}{\mu (u)}+\sigma \frac{J(u)}{\mu (u)}\right) \geq
\liminf_{\mu (u)\rightarrow \infty }\mu (u)=\infty .%
\end{array}%
\end{equation*}%
Hence by the above and by the continuity of $\Phi $ and $J$ we obtain that
the set $\{u\in X:\Phi (u)+\sigma J(u)\leq \rho \}$ is closed and bounded,
and consequently compact.

By the definition of $\Phi$ we have $\alpha=\inf_{u\in H}\Phi(u)=0$. Thus
since $\beta>r>0$ we obtain that $\beta>\alpha$. By Theorem \ref{abstract}
one of the two conditions (\ref{abstract}.3) and (\ref{abstract}.4) holds.
We will show that (\ref{abstract}.3) cannot hold. Suppose to the contrary
that (\ref{abstract}.3) holds true, i.e. there is a mapping $h:(0,\beta)\to
H $ with $\Phi(h(t))=t$ for every $t\in(0,\beta)$. Since $r\in(0,\beta)$,
then 
\begin{equation*}
t<r\iff\Phi(h(t))<r\iff \mu(h(t))<r,
\end{equation*}
\begin{equation*}
t=r\iff\Phi(h(t))=r\iff r\leq\mu(h(t))\leq s,
\end{equation*}
\begin{equation*}
t>r\iff\Phi(h(t))>r\iff \mu(h(t))>s.
\end{equation*}
Therefore 
\begin{equation*}
\limsup_{t\to r^-}\mu(h(t))\leq r=\mu(h(r)),
\end{equation*}
\begin{equation*}
\liminf_{t\to r^+}\mu(h(t))\geq s>\mu(h(r)).
\end{equation*}
But this contradicts the continuity of $h$. Hence the condition (\ref%
{abstract}.4) holds, that is there is $\sigma^\star>0$ such that the
functional $\Phi+\sigma^\star J$ has at least two minimizers, say $u_1$ and $%
u_2$ ($u_1\neq u_2$).

We will show that $\mu (u_{i})<r$ or $\mu (u_{i})>s$ for $i=1,2$. Suppose to
the contrary, that $r\leq \mu (u_{i})\leq s$ for some $i=1,2$. Then 
\begin{equation*}
\Phi (u_{i})+\sigma ^{\star }J(u_{i})=r+\sigma ^{\star }J(u_{i})>\sigma
^{\star }J(u_{i})=\Phi (0)+\sigma ^{\star }J(0),
\end{equation*}%
which contradicts with the fact that $u_{i}$ is a minimizer of $\Phi +\sigma
^{\star }J$. Put 
\begin{equation*}
E(u)=\mu (u)+\sigma ^{\star }J(u).
\end{equation*}%
Note that $u_{1}$ and $u_{2}$ are \emph{local} minimizers of $E$. Moreover $%
E\in C^{1}(H,\mathbb{R})$. We need only to show that $E$ has at least one
critical point $u_{3}\in H\setminus \{u_{1},u_{2}\}$. If both $u_{1}$ and $%
u_{2}$ are strict local minimizers of $E$, then using Mountain Pass
technique (see \cite[Theorem 2]{CIT}) we will find a critical point $%
u_{3}\notin \{u_{1},u_{2}\}$. If one of $u_{1}$ and $u_{2}$ is not strict
local minimizer, then $E$ admits infinitely many local minimizers at the
same level.
\end{proof}

\section{Applications of multiplicity result for anisotropic problems}

In order to apply Theorem \ref{abstrac1} for (\ref{zad}) we introduce the
following notation. Let $\mu (u)=\sum_{k=1}^{T+1}\frac{1}{p(k-1)}|\Delta
u(k-1)|^{p(k-1)}$. Clearly $\mu \in C^{1}(H,\mathbb{R})$, $\mu (0)=0$, $\mu
\geq 0$ and $\mu $ is coercive. For numbers $r,s>0$ we put $r^{\prime }=\inf
\{||u||_{\infty }:\mu (u)\geq r\}$ and $s^{\prime }=\sup \{||u||_{\infty
}:\mu (u)\leq s\}$. Hence 
\begin{equation*}
\mu (u)\geq r\implies ||u||_{\infty }\geq r^{\prime },
\end{equation*}%
\begin{equation*}
\mu (u)\leq s\implies ||u||_{\infty }\leq s^{\prime }.
\end{equation*}%
Let $F(k,t)=\int_{0}^{t}f(k,\tau )d\tau $, $J(u)=-\sum_{k=1}^{T}F(k,u(k))$,
and $E_{\lambda }(u)=\mu (u)+\lambda J(u)$.

\begin{theorem}
\label{th3} Let $0<r<s$. Assume that

\begin{itemize}
\item[(\protect\ref{th3}.1)] $\limsup_{|t|\rightarrow \infty }\frac{F(k,t)}{%
|t|^{p^{-}}}\leq 0$ for any $k\in \lbrack 1,T]$;

\item[(\protect\ref{th3}.2)] $\sum_{k=1}^{T}\sup_{|t|\leq s^{\prime
}}F(k,t)<\sum_{k=1}^{T}\sup_{t\in \mathbb{R}}F(k,t)$;

\item[(\protect\ref{th3}.3)] $\sup_{r^{\prime }\leq t\leq s^{\prime
}}F(k,t)\leq -\sum_{h\neq k}\sup_{|t|\leq s^{\prime }}F(h,t)$ for any $k\in
\lbrack 1,T]$.
\end{itemize}

Then there exists $\lambda ^{\star }$ such that the functional $E_{\lambda
^{\star }}$ has at least three critical points, two of which must be
nontrivial.
\end{theorem}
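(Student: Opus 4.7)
The plan is to apply Theorem \ref{abstrac1} to the functionals $\mu$ and $J$ defined just above the statement, and then observe that among three distinct critical points of $E_{\lambda^\star}$ at most one can be the trivial solution. The functional $\mu$ lies in $C^1(H,\mathbb{R})$, is nonnegative, vanishes at $0$, and is coercive by Lemma \ref{lem1}(a), since for $\|u\|>1$ one has $\mu(u)\geq \tfrac{1}{p^+}\sum_{k=1}^{T+1}|\Delta u(k-1)|^{p(k-1)} \geq \tfrac{C_1}{p^+}\|u\|^{p^-}-\tfrac{C_2}{p^+}$. It then suffices to verify the three hypotheses (\ref{abstrac1}.1)--(\ref{abstrac1}.3).

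\emph{For (\ref{abstrac1}.1),} hypothesis (\ref{th3}.1) yields, for every $\varepsilon>0$, a constant $M_\varepsilon$ with $F(k,t)\leq \varepsilon|t|^{p^-}+M_\varepsilon$ for all $k,t$. Combining the finite-dimensional equivalence $\sum_{k=1}^T |u(k)|^{p^-}\leq K_{p^-}^{p^-}\|u\|^{p^-}$ with the lower bound on $\mu$ just recorded gives $-J(u)/\mu(u) \leq \varepsilon\cdot \tfrac{K_{p^-}^{p^-} p^+}{C_1}+o(1)$ as $\mu(u)\to\infty$; sending $\varepsilon\downarrow 0$ proves $\limsup_{\mu(u)\to\infty} J(u)/\mu(u)\geq 0$. \emph{For (\ref{abstrac1}.2),} because the interior values $u(1),\dots,u(T)$ are unconstrained in $H$, $\inf_{u\in H} J(u)=-\sum_{k=1}^T\sup_{t\in\mathbb{R}}F(k,t)$, whereas $\mu(u)\leq s$ forces $\|u\|_\infty\leq s'$ by the very definition of $s'$, so $\inf_{\mu(u)\leq s} J(u)\geq -\sum_{k=1}^T\sup_{|t|\leq s'}F(k,t)$; hypothesis (\ref{th3}.2) then yields the required strict inequality. \emph{For (\ref{abstrac1}.3),} $J(0)=0$, and if $r\leq\mu(u)\leq s$ then $r'\leq\|u\|_\infty\leq s'$, so there is $k_0$ with $r'\leq|u(k_0)|\leq s'$ while $|u(k)|\leq s'$ for every $k$. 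Reading the supremum in (\ref{th3}.3) over $r'\leq|t|\leq s'$, which is the natural meaning when $u(k_0)$ can have either sign, one obtains $\sum_k F(k,u(k))\leq \sup_{r'\leq|t|\leq s'}F(k_0,t)+\sum_{h\neq k_0}\sup_{|t|\leq s'}F(h,t)\leq 0$, so $J(u)\geq 0 = J(0)$.

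Theorem \ref{abstrac1} then produces $\lambda^\star>0$ and three pairwise distinct critical points $u_1,u_2,u_3$ of $E_{\lambda^\star}$; since at most one of three distinct elements of $H$ can equal $0$, at least two of them are nontrivial solutions of (\ref{zad}). The principal technical obstacle is the quantitative estimate in (\ref{abstrac1}.1): the variable exponent $p(k-1)$ appearing in $\mu$ must be reconciled with the single exponent $p^-$ of (\ref{th3}.1). Lemma \ref{lem1}(a) is precisely what is needed, since it absorbs the variable-exponent sum into a clean lower bound of the form $C_1\|u\|^{p^-}-C_2$, after which the $\varepsilon$-argument closes the step.
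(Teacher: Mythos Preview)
Your proof is correct and follows essentially the same route as the paper: you verify hypotheses (\ref{abstrac1}.1)--(\ref{abstrac1}.3) of Theorem~\ref{abstrac1} for the pair $(\mu,J)$, using Lemma~\ref{lem1}(a) together with the norm equivalence on $H$ for the first, the definition of $s'$ and assumption (\ref{th3}.2) for the second, and assumption (\ref{th3}.3) at an index realizing $\|u\|_\infty$ for the third. The only cosmetic differences are that for (\ref{abstrac1}.1) the paper splits the sum according to $|u(k)|\leq K$ versus $|u(k)|>K$ rather than using your uniform bound $F(k,t)\leq\varepsilon|t|^{p^-}+M_\varepsilon$, and for (\ref{abstrac1}.2) the paper separates the cases $\inf_H J>-\infty$ and $\inf_H J=-\infty$ while you handle both at once in the extended reals.
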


\begin{proof}
By Lemma \ref{lem1}(a), $\mu (u)\geq c_{1}||u||^{p^{-}}-c_{2}$ for some
positive constants $c_{1},c_{2}$ and $||u||>1$. Since there is a positive $%
c_{3}$ with 
\begin{equation*}
||u||^{p^{-}}\geq c_{3}\sum_{k=1}^{T}|u(k)|^{p^{-}},
\end{equation*}%
then 
\begin{equation*}
\mu (u)\geq c_{1}c_{3}\sum_{k=1}^{T}|u(k)|^{p^{-}}-c_{2}\geq \frac{c_{1}c_{3}%
}{2}\sum_{k=1}^{T}|u(k)|^{p^{-}}
\end{equation*}%
provided $\mu (u)$ is sufficiently large.

Let $\varepsilon >0$. Using (\ref{th3}.1) we will find $K>0$ with 
\begin{equation*}
\frac{F(k,t)}{|t|^{p^{-}}}<\frac{c_{1}c_{3}\varepsilon }{2T}
\end{equation*}%
for any $k\in \lbrack 1,T]$ and $|t|>K$. Let $M=\max \{F(k,t):k\in \lbrack
1,T],|t|\leq K\}$. Then for $\mu (u)>MT/\varepsilon $ with $||u||\geq 1$ we
obtain 
\begin{equation*}
\frac{J(u)}{\mu (u)}=\frac{-\sum_{k=1}^{T}F(k,u(k))}{\mu (u)}\geq -\frac{%
\sum_{k=1}^{T}|F(k,u(k))|}{\mu (u)}\geq
\end{equation*}%
\begin{equation*}
-\frac{\sum_{|u(k)|\leq K}|F(k,u(k))|}{\mu (u)}-\frac{%
\sum_{|u(k)|>K}|F(k,u(k))|}{\frac{c_{1}c_{3}}{2}\sum_{k=1}^{T}|u(k)|^{p^{-}}}%
\geq
\end{equation*}%
\begin{equation*}
-\sum_{|u(k)|\leq K}\frac{M}{MT/\varepsilon }-\sum_{|u(k)|>K}\frac{%
|F(k,u(k))|}{\frac{c_{1}c_{3}}{2}|u(k)|^{p^{-}}}\geq
\end{equation*}%
\begin{equation*}
-\sum_{|u(k)|\leq K}\frac{\varepsilon }{T}-\sum_{|u(k)|>K}\frac{\varepsilon 
}{T}=-\varepsilon .
\end{equation*}

Hence we get condition (\ref{abstrac1}.1).

To show condition (\ref{abstrac1}.2) we consider two cases. \newline
\textbf{Case 1.} Suppose that $\inf_{u\in H}J(u)>-\infty $. We will show
that for any $\sigma >0$ the following equality holds 
\begin{equation}
\inf_{||u||_{\infty }\leq \sigma }J(u)=-\sum_{k=1}^{T}\sup_{|t|\leq \sigma
}F(k,t).  \label{eq2}
\end{equation}%
Note that for any $||u||_{\infty }\leq \sigma $ we have 
\begin{equation*}
J(u)=-\sum_{k=1}^{T}F(k,u(k))\geq -\sum_{k=1}^{T}\sup_{|t|\leq \sigma }F(k,t)
\end{equation*}%
On the other hand for any $\varepsilon >0$ and $k\in \lbrack 1,T]$ there is $%
|t_{k}|\leq \sigma $ with 
\begin{equation*}
F(k,t_{k})>\sup_{|t|\leq \sigma }F(k,t)-\frac{\varepsilon }{T}.
\end{equation*}%
Define $\tilde{u}\in H$ by $\tilde{u}(k)=t_{k}$ for $k\in \lbrack 1,T]$.
Then $||\tilde{u}||_{\infty }\leq \sigma $ and 
\begin{equation*}
J(\tilde{u})=-\sum_{k=1}^{T}F(k,\tilde{u}(k))<-\sum_{k=1}^{T}\sup_{|t|\leq
\sigma }F(k,t)-\varepsilon .
\end{equation*}%
Hence we obtain (\ref{eq2}). Similarly one can show that 
\begin{equation}
\inf_{u\in H}J(u)=-\sum_{k=1}^{T}\sup_{t\in \mathbb{R}}F(k,t).  \label{eq3}
\end{equation}%
Now, using (\ref{th3}.2), (\ref{eq2}) and (\ref{eq3}) we obtain 
\begin{equation*}
\inf_{u\in H}J(u)=-\sum_{k=1}^{T}\sup_{t\in \mathbb{R}}F(k,t)<-%
\sum_{k=1}^{T}\sup_{|t|\leq s^{\prime }}F(k,t)=\inf_{||u||_{\infty }\leq
s^{\prime }}J(u)\leq \inf_{\mu (u)\leq s}J(u).
\end{equation*}%
\textbf{Case 2.} Suppose that $\inf_{u\in H}J(u)=-\infty $. Then (\ref%
{abstrac1}.2) holds since continuous functional $J$ attains its minimum on
compact set $\{u:||u||_{\infty }\leq s^{\prime }\}$.

Now we will show (\ref{abstrac1}.3). Note that $J(0)=0$. For any $u\in H$
with $r\leq \mu (u)\leq s$ we have $r^{\prime }\leq ||u||_{\infty }\leq
s^{\prime }$. Take $k\in \lbrack 1,T]$ with $||u||_{\infty }=u(k)$. By (\ref%
{th3}.3) we obtain 
\begin{equation*}
J(u)=-\sum_{h=1}^{T}F(h,u(h))=-F(k,u(k))-\sum_{h\neq k}F(h,u(h))\geq
\end{equation*}%
\begin{equation*}
-\sup_{r^{\prime }\leq |t|\leq s^{\prime }}F(k,t)-\sum_{h\neq
k}\sup_{|t|\leq s^{\prime }}F(h,t)\geq 0.
\end{equation*}%
Finally, by Theorem \ref{abstrac1} there is $\lambda ^{\star }>0$ such that
the functional $E_{\lambda ^{\star }}$ has at least three critical points in 
$H$.
\end{proof}

\begin{corollary}
\label{cor} Let $0<r^{\prime }<s^{\prime }$. Suppose that there is a
positive constant $c$ such that 
\begin{equation*}
r^{\prime }<\frac{1}{2}\left( \frac{cp^{-}}{T+1}\right) ^{\frac{1}{p^{+}}}%
\mbox{ and
}s^{\prime }>\frac{T+1}{2}(cp^{+})^{\frac{1}{p^{-}}}
\end{equation*}%
and conditions (\ref{th3}.1)--(\ref{th3}.3) are fulfilled. Then there is $%
\lambda ^{\star }>0$ such that $E_{\lambda ^{\star }}$ has at least three
critical points, two of which must be nontrivial.
\end{corollary}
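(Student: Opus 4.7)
The plan is to apply Theorem \ref{th3} after choosing suitable $r,s$ with $0<r<s$. Setting $\tilde r := \inf\{\|u\|_{\infty} : \mu(u)\geq r\}$ and $\tilde s := \sup\{\|u\|_{\infty} : \mu(u)\leq s\}$, I would show $\tilde r \geq r'$ and $\tilde s \leq s'$; since $\{|t|\leq \tilde s\} \subseteq \{|t|\leq s'\}$ and $[\tilde r,\tilde s] \subseteq [r',s']$, conditions (\ref{th3}.2) and (\ref{th3}.3) pass from the hypothesised $r',s'$ to the sharper $\tilde r, \tilde s$ (every sup shrinks), while (\ref{th3}.1) does not involve $r',s'$. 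Theorem \ref{th3} then directly supplies the $\lambda^\star$ and the three critical points, two of which are nontrivial.

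The two required implications are (a) $\|u\|_{\infty}<r'\Rightarrow \mu(u)<r$ and (b) $\|u\|_{\infty}>s'\Rightarrow \mu(u)>s$. For (a) I would bound $|\Delta u(k-1)|\leq 2\|u\|_{\infty}<2r'$ and, by monotonicity of $t\mapsto t^{p(k-1)}$ in the relevant regime, dominate $|\Delta u(k-1)|^{p(k-1)}$ by $(2r')^{p^+}$; summing over $k$ and dividing by $p^-$ yields $\mu(u)\leq \tfrac{T+1}{p^-}(2r')^{p^+}<c$, using the first hypothesis $r'<\tfrac{1}{2}(cp^-/(T+1))^{1/p^+}$.

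For (b), the boundary conditions $u(0)=u(T+1)=0$ together with $\|u\|_{\infty}$ being attained at some index $k_0$ give $\sum_{k=1}^{T+1}|\Delta u(k-1)|\geq 2\|u\|_{\infty} > 2s'$, so some summand satisfies $|\Delta u(j-1)|>2s'/(T+1)>(cp^+)^{1/p^-}$ by the second hypothesis; monotonicity then yields $|\Delta u(j-1)|^{p(j-1)}>cp^+$, and hence $\mu(u)\geq |\Delta u(j-1)|^{p(j-1)}/p^+ >c$. Since both inequalities in the hypothesis are strict, there is slack to pick $r<c<s$ so that both (a) and (b) hold with the required strict inequalities. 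The main technical subtlety is the step $|\Delta u(k-1)|^{p(k-1)}\leq (2r')^{p^+}$, whose direction depends on whether $2r'\geq 1$; the specific exponents $1/p^+$ and $1/p^-$ appearing in the hypothesis are tuned precisely to the regime in which these termwise dominations are tight.
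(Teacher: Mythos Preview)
Your approach is essentially the paper's: both establish the same two elementary implications relating $\mu(u)$ and $\|u\|_\infty$ (the paper argues them directly, you argue the contrapositives, and the resulting quantitative bounds coincide) and then reduce to Theorem~\ref{th3}. The one genuine difference is in the final reduction step. The paper invokes continuity and surjectivity onto $[0,\infty)$ of the maps $t\mapsto\inf\{\|u\|_\infty:\mu(u)\ge t\}$ and $t\mapsto\sup\{\|u\|_\infty:\mu(u)\le t\}$ to choose $r<c<s$ so that the derived quantities equal the given $r',s'$ \emph{exactly}; you instead exploit the strictness of the two hypotheses to pick $r<c<s$ with $\tilde r\ge r'$ and $\tilde s\le s'$, and then transfer conditions (\ref{th3}.2)--(\ref{th3}.3) by monotonicity of the relevant suprema. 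Your route is marginally more elementary, since it sidesteps the continuity/surjectivity claim that the paper states without justification; the paper's route has the cosmetic advantage that Theorem~\ref{th3} is invoked with its $r',s'$ literally matching the hypothesised ones. The exponent-direction subtlety you flag at the end (the bound $|\Delta u(k-1)|^{p(k-1)}\le(2r')^{p^+}$ requires $2r'\ge1$, and the analogous bound in (b) requires $cp^+\ge1$) is real, and the same issue is present---though unacknowledged---in the paper's own estimates (e.g.\ in passing from $(rp(k-1)/(T+1))^{1/p(k-1)}$ to $(rp^-/(T+1))^{1/p^+}$).
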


\begin{proof}
Let $r>0$. Then $\mu (u)\geq r$ is equivalent to 
\begin{equation*}
\sum_{k=1}^{T+1}\frac{1}{p(k-1)}|\Delta u(k-1)|^{p(k-1)}\geq r.
\end{equation*}%
Therefore there is $k\in \lbrack 1,T+1]$ with 
\begin{equation*}
\frac{1}{p(k-1)}|\Delta u(k-1)|^{p(k-1)}\geq \frac{r}{T+1}.
\end{equation*}%
Hence 
\begin{equation*}
|\Delta u(k-1)|\geq \left( \frac{rp(k-1)}{T+1}\right) ^{\frac{1}{p(k-1)}}.
\end{equation*}%
Thus there is $k\in \lbrack 1,T]$ with 
\begin{equation*}
|u(k)|\geq \frac{1}{2}\left( \frac{rp(k-1)}{T+1}\right) ^{\frac{1}{p(k-1)}%
}\geq \frac{1}{2}\left( \frac{rp^{-}}{T+1}\right) ^{\frac{1}{p^{+}}}.
\end{equation*}%
Consequently 
\begin{equation}
\inf \{||u||_{\infty }:\mu (u)\geq r\}\geq \frac{1}{2}\left( \frac{rp^{-}}{%
T+1}\right) ^{\frac{1}{p^{+}}}.  \label{eq4}
\end{equation}

Now, let $s>0$. Then $\mu (u)\leq s$ is equivalent to 
\begin{equation*}
\sum_{k=1}^{T+1}\frac{1}{p(k-1)}|\Delta u(k-1)|^{p(k-1)}\leq s.
\end{equation*}%
Hence 
\begin{equation*}
\frac{1}{p(k-1)}|\Delta u(k-1)|^{p(k-1)}\leq s
\end{equation*}%
for every $k\in \lbrack 1,T+1]$. Thus for every $k\in \lbrack 1,T+1]$ we
obtain 
\begin{equation*}
|\Delta u(k-1)|\leq (sp(k-1))^{\frac{1}{p(k-1)}}.
\end{equation*}%
Note that if $|\Delta u(k-1)|\leq a$ for some $a$ and every $k\in \lbrack
1,T+1]$, then $|u(k)|\leq \frac{T+1}{2}a$. Using this observation we obtain
that 
\begin{equation*}
|u(k)|\leq \frac{T+1}{2}(sp(k-1))^{\frac{1}{p(k-1)}}.
\end{equation*}%
Thus 
\begin{equation*}
||u||_{\infty }\leq \frac{T+1}{2}(sp^{+})^{\frac{1}{p^{-}}}.
\end{equation*}%
Therefore 
\begin{equation*}
\sup \{||u||_{\infty }:\mu (u)\leq s\}\leq \frac{T+1}{2}(sp^{+})^{\frac{1}{%
p^{-}}}.
\end{equation*}

Suppose that $c>0$ is such that 
\begin{equation*}
r^{\prime }<\frac{1}{2}\left( \frac{cp^{-}}{T+1}\right) ^{\frac{1}{p^{+}}}%
\mbox{ and
}s^{\prime }>\frac{T+1}{2}(cp^{+})^{\frac{1}{p^{-}}}
\end{equation*}%
Since the following functions $t\mapsto \inf \{||u||_{\infty }:\mu (u)\geq
t\}$ and $t\mapsto \sup \{||u||_{\infty }:\mu (u)\leq t\}$ are continuous
and their ranges are equal to $[0,\infty )$, then there are $r<c<s$ with 
\begin{equation*}
\sup \{||u||_{\infty }:\mu (u)\leq s\}=s^{\prime }\mbox{ and }\inf
\{||u||_{\infty }:\mu (u)\geq r\}=r^{\prime }.
\end{equation*}
\end{proof}

\textbf{Example.} Consider the following equation 
\begin{equation*}
\left\{%
\begin{array}{ll}
\Delta\left(|\Delta u(0)|^{2}\Delta u(0)\right)=2\lambda\left(\frac{u^3(1)}{%
10}-u(1)\right), &  \\ 
\Delta\left(|\Delta u(1)|^{3}\Delta u(1)\right)=4\lambda\left(u(2)-\frac{1}{%
10}\right)^3, &  \\ 
u(0)=u(3)=0. & 
\end{array}
\right.
\end{equation*}
Then $T=2$, $p(0)=4$, $p(1)=5$, $p^-=4$, $p^+=5$, $f(1,t)=2\left(\frac{t^3}{%
10}-t\right)$ and $f(2,t)=4\left(t-\frac{1}{10}\right)^3$. Thus 
\begin{equation*}
F(1,t)=\frac{t^4}{20}-t^2\mbox{ and }F(2,t)=\frac{1}{10^4}- \left(t-\frac{1}{%
10}\right)^4.
\end{equation*}
Taking $c=1$ we obtain 
\begin{equation*}
\frac{1}{2}\left(\frac{cp^-}{T+1}\right)^\frac{1}{p^+}=\frac12\left(\frac43%
\right)^\frac15 \approx 0.5296 \mbox{ and } \frac{T+1}{2}(cp^+)^\frac{1}{p^-}%
=\frac32 5^\frac14\approx 2.2430.
\end{equation*}
Take $r^{\prime }=0.2$ and $s^{\prime }=3$. Note that $\sup_{0.2\leq|t|\leq
3}F(1,t)<0$, $\sup_{|t|\leq 3}F(1,t)=0$ and $\sup_{t\in\mathbb{R}%
}F(1,t)=\infty$. Moreover $\sup_{0.2\leq|t|\leq 3}F(2,t)=0$ and $%
\sup_{|t|\leq 3}F(1,t)=\sup_{t\in\mathbb{R}}F(1,t)=\frac{1}{10^4}$. Hence by
Corollary \ref{cor} the equation has at least three solutions for some $%
\lambda^\star>0$.

\begin{tabular}{ll}
\begin{tabular}{l}
Marek Galewski \\ 
Institute of Mathematics, \\ 
Technical University of Lodz, \\ 
Wolczanska 215, 90-924 Lodz, Poland, \\ 
marek.galewski@p.lodz.pl%
\end{tabular}
& 
\begin{tabular}{l}
Szymon G\l \c{a}b \\ 
Institute of Mathematics, \\ 
Technical University of Lodz, \\ 
Wolczanska 215, 90-924 Lodz, Poland, \\ 
szymon.glab@p.lodz.pl%
\end{tabular}%
\end{tabular}

\end{document}